\let\cline\cmidrule
\newcommand{\ubar}[1]{\underaccent{\bar}{#1}}
\crefname{equation}{}{} 
\crefname{assumption}{Assumption}{} 
\newcommand{\crefdefpart}[2]{%
  \hyperref[#2]{\namecref{#1}~\labelcref*{#1}~\ref*{#2}}%
}
\crefname{enumi}{}{} 
\Crefname{enumi}{}{} 
\newtheorem{lemma}{Lemma}%
\newtheorem{assumption}{Assumption}%
\theoremstyle{thmstyleone}%
\newtheorem{theorem}{Theorem}
\theoremstyle{thmstyletwo}%
\newtheorem{remark}{Remark}%
\theoremstyle{thmstylethree}%
\newcommand{\VV}[1]{\textcolor{black}{#1}}
\begin{document}


\title[Transition Uncertainties in Constrained Markov Decision Models: A Robust Optimization Approach]{Transition Uncertainties in Constrained Markov Decision Models: A Robust Optimization Approach}


\author*[]{\fnm{V} \sur{Varagapriya}}\email{varagapriyav@gmail.com}



\affil[]{\orgdiv{Department of Integrated Systems Engineering}, \orgname{The Ohio State University}, \orgaddress{\city{Columbus}, \postcode{43210}, \state{OH}, \country{USA}}}



\abstract{We examine a constrained Markov decision process under uncertain transition probabilities, with the uncertainty modeled as deviations from observed transition probabilities. We construct the uncertainty set associated with the deviations using polyhedral and second-order cone constraints and employ a robust optimization framework. We demonstrate that each inner optimization problem of the robust model can be equivalently transformed into a second-order cone programming problem. 
Using strong duality arguments, we show that the resulting robust problem can be equivalently reformulated into a \VV{second-order cone programming problem with bilinear constraints.} 
 In the numerical experiments, we study a machine replacement problem and explore potential sources of uncertainty in the transition probabilities. We examine how the optimal values and solutions differ as we vary the feasible region of the uncertainty set, considering only polyhedral constraints and a combination of polyhedral and second-order cone constraints. Furthermore, we analyze the impact of the number of states, the discount factor, and variations in the feasible region of the uncertainty set on the optimal values.     }

\keywords{Constrained Markov decision process, Robust optimization, Non-convex programming problem, Second-order cone programming problem, Machine replacement problem}



\maketitle

\section{Introduction}\label{Introduction}
A sequential decision-making system is a time-evolving model that typically consists of, possibly multiple, decision-makers whose choice of actions determines how the system progresses over time and the incentives received from it. The behavior of the system is influenced by inherent uncertainties in future outcomes. A common modeling technique for such a system with a single decision-maker is a Markov decision process (MDP). A given system can be modeled as an MDP problem in the following way:  at any given time, the system occupies a state from a predetermined set of states. At each state, the decision-maker chooses an action, consequently incurring a running cost. The system then shifts to another state the next time, following an underlying transition probability function, and this process repeats every time. The decision-maker aims to minimize the overall cost incurred; the common overall costs studied in the literature include expected total, average, and discounted costs \cite{puterman}. A few applications of MDP problems include machine replacement and maintenance, inventory management, communication modeling, and queueing control problems \cite{puterman}. Oftentimes, in real-life scenarios, multiple running costs may be incurred for each action chosen. In such cases, the decision-maker aims to minimize one overall cost while ensuring all the other costs are below pre-specified threshold values. For example, a factory owner who owns a machine may have to decide whether to repair it or not at each time, where the time interval could denote months or years. Depending on the action chosen, the owner may incur multiple running costs, such as a working cost and an opportunity cost \cite{cmdp_cost_uncertainty}. In this case, the owner aims to minimize the overall working cost subject to an upper bound constraint on the overall opportunity cost. In general, such a class of MDP problem is called a constrained Markov decision process (CMDP). An MDP problem can be solved using techniques such as dynamic programming algorithms, including value iteration, policy iteration, and their variants, and linear programming (LP)-based reformulation \cite{bertsekas2005dynamic,puterman}. In contrast, the presence of possibly multiple  constraints in a CMDP problem bars us from using dynamic programming algorithms, as a result of which, 
 it is typically solved by reformulating it into an LP problem    \cite{altman}.

 A primary weakness of the solution techniques discussed above is the underlying assumption that the model parameters, namely the running costs and the transition probabilities, are exactly known. These techniques overlook that the values of such parameters are typically obtained from historical data and with prior experience and thus, are subject to errors. Consequently, the optimal solution derived from an MDP/CMDP problem without acknowledging the impending uncertainties may provide us with a degraded solution, i.e., it may not be optimal in a real-life setting \cite{mannor2007bias}. In the previous literature, this drawback has been addressed in an MDP setup using various frameworks. In earlier works such as \cite{satia1973markovian}, an interval-based robust and a Bayesian approach was studied under uncertain transition probabilities, while in \cite{MDP_imprecise_TP},  this problem was considered with uncertainty sets defined by a finite number of linear inequalities. In \cite{givan2000bounded}, the authors described an MDP problem by placing bounds on costs and transition probabilities and consequently derived bounds on the resulting overall cost.  A robust MDP problem under uncertain transition probabilities was also studied in \cite{nilim2005robust,iyengar2005robust}, with separable uncertainty sets for each state-action pair, referred to as $(s,a)$-rectangular sets.   This formulation was generalized to each state in \cite{wiesemann2013robust} and the corresponding uncertainty sets are referred to as $s$-rectangular sets. \VV{The authors in \cite{grand2024convex} considered both these rectangular sets and formulated a convex programming problem by regularizing the robust Bellman operator.} On the other hand, in  \cite{goyal2023robust}, the \VV{authors introduced} a model where a fixed number of factors drive the uncertainty in the transition probabilities.  In \VV{most} of these works, the associated robust MDP problem was solved by constructing specialized dynamic programming-based algorithms. The resulting optimal solution holds against all possible realizations of the uncertainty sets.  The articles reviewed up to this point concentrate on a stationary MDP problem.  For an MDP problem with non-stationary problem data,  an LP approach was derived in \cite{ghate2013linear}, and a simplex algorithm was utilized to achieve the optimal solution for an infinite horizon. Meanwhile, an approximated LP problem was formulated for a finite horizon in \cite{BHATTACHARYA2017570} to reduce the problem size.  
In contrast, by debating the restrictive nature of a robust MDP problem,  a chance-constrained framework was introduced in \cite{erick} under random model parameters. \VV{ Specifically, under random rewards, the authors equivalently derived a second-order cone programming (SOCP) problem, while under random transition probabilities, they constructed approximations.}

 In the context of a CMDP setup, the authors in \cite{zadorojniy2006robustness} investigated conditions under which a minor change in the constraints retains the optimality of the original problem's solution. Meanwhile, in \cite{Kardes}, the author introduced a robust CMDP problem, addressing uncertain running costs that belong to intervals. Since multiple running costs are involved, the dynamic programming algorithms studied in an MDP setup are not applicable. Therefore, the problem was equivalently reformulated into an LP problem, and an optimal policy was derived. This problem was extended in \cite{cmdp_cost_uncertainty} to consider general uncertainty sets, and equivalent convex programming problems were derived. On the other hand, a CMDP problem under random running costs with a known distribution was studied in \cite{varagapriya2022joint}, and SOCP approximations were constructed.   The literature reviewed in a CMDP setup has predominantly addressed uncertain running costs. To the best of our knowledge, there is a notable lack of research addressing uncertain transition probabilities since the resulting uncertain CMDP problem becomes a computationally difficult problem. 
Recently, in \cite{wang2022robust}, the authors investigated this \VV{within a reinforcement learning paradigm, proposing a robust primal-dual algorithm accompanied by its convergence analysis. }
\VV{On the other hand, in \cite{Rankone_TPM}, the authors aimed to provide an exact reformulation of this problem}
by constructing a robust CMDP problem with a special structure on uncertain transition probabilities and equivalently reformulated \VV{it} into a bilinear programming (BP) problem. Specifically, this work assumed that the uncertain transition probability matrix under a given stationary policy has a rank-1 uncertainty structure. However, in real-life scenarios, the uncertainties need not have such a specialized structure; thus, it significantly restricts the problem class that can be studied. \VV{For instance, the machine replacement problem studied in \Cref{Machine replacement problem} of this paper is motivated from \cite{Rankone_TPM}, wherein the authors assumed that the matrix form of uncertainty in transition probabilities has rank 1.} In contrast, we broaden this scope by showing that uncertainty may originate from multiple states and potentially exhibit various dependency structures. Therefore, deriving motivation from \cite{Rankone_TPM}, we express the uncertain transition probabilities as a sum of observed transition probabilities and uncertain parameters. However, we diverge from this work by not imposing any rank-based limitations, i.e., the uncertainty in the transition probabilities could originate from possibly multiple states and may have different values.
The following comparison outlines the novelty of our work relative to \cite{Rankone_TPM}:
\begin{itemize}
    \item We assume that the uncertain parameters belong to a set defined by polyhedral and second-order cone constraints. In addition to the fact that we do not impose any rank-based limitations, we study a broader formulation than existing models on robust MDPs by considering an $s$-rectangular uncertainty set.
\item Due to a generalized uncertainty set considered in this paper, the arguments used to derive a BP problem in \cite{Rankone_TPM} cannot be directly applied here. In \cite{Rankone_TPM}, the authors transformed a linear fractional programming problem from the inner optimization problem of the robust model into an LP problem. In contrast, we introduce a novel approach to defining the decision variables in each inner optimization problem of our robust model and equivalently transform it into an SOCP problem.
\item We employ the strong duality argument of an SOCP problem to show that the resulting robust problem can be equivalently reformulated into  \VV{an SOCP problem with bilinear constraints.} Unlike \cite{Rankone_TPM}, where the authors adopted an occupation measure technique to derive a BP problem, our approach embeds the policy of the robust model directly within the decision vector.
\end{itemize}
\color{black}
  Therefore, this paper significantly generalizes the previous work while also introducing a novel methodology for deriving an equivalent reformulation.

We perform numerical experiments on a machine replacement problem under uncertain transition probabilities \cite{erick}. With a pre-specified time limit, we solve the resulting problem to examine the variation in the optimal values and policies with the feasible region of the uncertainty set. Specifically, we consider only polyhedral constraints with enlarging feasible regions. We then consider a combination of both polyhedral and second-order cone constraints with enlarging feasible regions. Additionally, we analyze the optimal values of the problem under varying states and discount factors.

We organize the paper as follows. In \Cref{Constrained Markov decision processes}, we introduce a classical CMDP problem and discuss its reformulation into an LP problem. In \Cref{Robust CMDP problem}, we construct a robust CMDP problem under uncertain transition probabilities and derive its equivalent reformulation. We analyze the result obtained on a machine replacement problem in \Cref{Machine replacement problem} and conclude the paper in \Cref{Conclusion}.

\section{Constrained Markov decision processes}\label{Constrained Markov decision processes}
We consider a discrete-time infinite horizon CMDP problem and define it using a tuple $\big( S, A, \mathcal{K}, \gamma, p, c, ( d^k )_{k \in \mathbb{K}}, (\xi_k)_{k \in \mathbb{K}} \big)$ \cite{altman}.   The sets $S$ and $A = \displaystyle  \bigcup_{s \in S} A(s)$ denote the finite set of states and actions available at different states, respectively, while the set  $\mathcal{K}$ denotes the set of all state-action pairs, i.e., $\mathcal{K} = \{(s, a) \mid s\in S, a \in A(s)\}$. At time $t=0$, the system starts from a state $s_0$ with a probability $\gamma(s_0)$. Thus, $\gamma = \big( \gamma(s) \big)_{s \in S}$ denotes the initial probability distribution. If the decision-maker chooses an action $a_0 \in A(s_0)$, running costs $c(s_0, a_0)$ and $d^k(s_0, a_0)$,  $k \in \mathbb{K} = \{ 1, 2, \ldots, K \}$ are incurred. Thus, $c = \big( c(s,a) \big)_{(s, a) \in \mathcal{K}}$ and  $d^k = \big( d^k(s,a) \big)_{(s, a) \in \mathcal{K}}$ denote the running cost vectors. At time $t=1$,  the system shifts to the next state $s_1$, following a transition probability $p(s_1 \vert s_0, a_0)$. This process repeats for an infinite horizon.

At a given time $t$  of the system, the action chosen by the decision-maker at a state is described by a decision rule (Section 2.1.4 of \cite{puterman}). It may depend on the history of state-action pairs followed till time $t-1$ along with the state at time $t$. In this case, the decision rule is said to be history-dependent.  Let the history at time $t$ be denoted by $h_t = (s_0, a_0, s_1, a_1,\ldots,s_{t-1}, a_{t-1},s_t)$ and the set of all such histories be denoted by $H_t$. We define a history-dependent decision rule by $f^h_t: H_t \rightarrow \wp(A)$, where $\wp(A)$ denotes the probability distribution on $A$. Alternatively, the decision rule may be Markovian,  i.e., the action chosen at any time $t$ may depend only on the state at time $t$.  This rule becomes stationary when it depends only on the state, i.e., it is independent of both history and time. We denote this rule by $f$.
 The decision rule implemented over time is called a policy. 
We denote a general history-dependent and a stationary policy by $f^h$ and $f$,  and the set of all such policies by $F_{HD}$ and $F_S$, respectively (for simplicity of notation, we denote both the stationary decision rule and the associated policy by $f$).

In this paper, we study the case when the running costs
incurred in the future are discounted with a discount factor  $\alpha \in (0,1)$, i.e., the overall
costs are the expected discounted costs. Denoting a random  state-action pair at a time $t$ by $(\mathbb{X}_t, \mathbb{A}_t)$,  probability measure over the state-action trajectories by $\mathbb{P}_{\gamma}^{f^h} (\cdot)$, and the associated expectation operator by $\mathbb{E}_{\gamma}^{f^h} (\cdot)$, for given initial distribution $\gamma$ and $f^h \in F_{HD}$,  we define the expected discounted  costs associated with $c$ and $d^k$, $k \in \mathbb{K}$, by 
\begin{align*}
     C_{\alpha}(\gamma,f^h) & = (1-\alpha) 
\sum_{t=0}^{\infty}\alpha^{t}\mathbb{E}_{\gamma}^{f^h} (c(\mathbb{X}_t,\mathbb{A}_t))  \nonumber \\
& = \sum_{(s,a) \in \mathcal{K}} g_{\alpha}(\gamma,f^h;s,a)c(s,a),  \\
 D^k_\alpha(\gamma,f^h) & = (1-\alpha)\sum_{t=0}^\infty \alpha^t  \mathbb{E}_{\gamma}^{f^h} (d^k(\mathbb{X}_t,\mathbb{A}_t))  \nonumber  \\ 
    & = \sum_{(s,a) \in \mathcal{K}} g_{\alpha}(\gamma,f^h;s,a)d^k(s,a), \ \forall \ k \in \mathbb{K},  
\end{align*}
where 
$
\displaystyle g_{\alpha}  (\gamma,f^h;s,a)  =  
  (1-\alpha)\sum_{t=0}^\infty \alpha^t \mathbb{P}_{\gamma}^{f^h}(\mathbb{X}_t=s,\mathbb{A}_t=a),$  for all  $(s,a) \in \mathcal{K}. 
 $
The \VV{probability measure,} $g_{\alpha}(\gamma,f^h)$, \VV{also called} the occupation measure\VV{, assigns a probability $g_{\alpha}  (\gamma,f^h;s,a)$ to each $(s,a) \in \mathcal{K}$}  \cite{altman}. The decision-maker aims to derive an optimal policy by minimizing the expected discounted cost associated with $c$ while ensuring the expected discounted costs associated with $d^k$ have pre-specified upper bounds  $\xi_k$, for all $k \in \mathbb{K}$. Thus, we formulate a CMDP problem as
\begin{align}\label{COP_HD_policy}
 \min_{f^h \in F_{HD}} & \  \ C_\alpha(\gamma,f^h) \nonumber \\
  \text{s.t.} & \  \ D^k_\alpha(\gamma,f^h) \leq \xi_k, \ \forall \ k \in \mathbb{K}.
\end{align}
It is well-known that when all the model parameters are exactly known and stationary, i.e., they are independent of time, the above problem can be restricted to the class of stationary policies without loss of optimality (for proof, see Theorem 3.1 of \cite{altman}).  
Furthermore, for a given $f \in F_S$, we can re-define the expected discounted costs corresponding to the cost vectors  $c$ and $d^k$,  $k \in \mathbb{K}$, in matrix form as  
\begin{align}\label{statonry_cost}
     C_{\alpha}(\gamma,f) & = (1-\alpha) \gamma^T \big(I - \alpha P_f \big)^{-1} c_f, \nonumber \\ 
    D^k_\alpha(\gamma,f) & = (1-\alpha) \gamma^T \big(I - \alpha P_f \big)^{-1} d^k_f, \ \forall \ k \in \mathbb{K},
\end{align}
where $(\cdot)^T$ denotes transposition, $c_f$ and $d^k_f$ denote the $\vert S \vert$-dimensional cost vectors under $f$, whose $s^{th}$-components are defined by  $\displaystyle \sum_{a \in A(s)} f(s,a) c(s,a)$ and $\displaystyle \sum_{a \in A(s)} f(s,a) d^k(s,a)$,   respectively. In addition, $P_f$ denotes the $\vert S \vert \times \vert S \vert$-dimensional transition probability matrix under $f$, whose component for a transition from a state $s$ to $s'$ is defined by $\displaystyle \sum_{a \in A(s)} f(s, a) p(s' \vert s, a)$ while $I $ denotes the $\vert S \vert \times \vert S \vert$-dimensional identity matrix. 
Furthermore, it follows from Theorem 3.2 of \cite{altman} that \eqref{COP_HD_policy} under the class of stationary policies can be equivalently reformulated into the following LP problem using a decision vector $\rho \in \mathbb{R}^{|\mathcal{K}|}$, where $\vert \mathcal{K} \vert$ denotes the cardinality of $\mathcal{K}$.
\begin{align}\label{equi_LP}
    \min_{\rho \in \mathbb{R}^{|\mathcal{K}|}} & \  \sum_{(s,a) \in \mathcal{K}} \rho(s,a)c(s,a) \nonumber\\
    \text{s.t.} & \  \sum_{(s,a) \in \mathcal{K}} \rho(s,a)d^k(s,a)\leq \xi_k, \ \forall \ k \in \mathbb{K}, \nonumber \\ 
    & \ \sum_{(s,a) \in \mathcal{K}} \rho(s,a) (\delta(\bar{s}, s)
 -  \alpha  p(\bar{s}|s,a))  
 = (1-\alpha) \gamma(\bar{s}),\ \forall \ \bar{s} \in S, \nonumber \\ 
 & \ \rho(s,a)\geq 0, \ 
 \forall \ (s,a) \in \mathcal{K},
\end{align}
where 
$\delta(\bar{s}, s)$ denotes the Kronecker delta, i.e., $\delta(\bar{s}, s) = 1$ if $\bar{s} = s$, and $0$, otherwise. As a consequence of the above  equivalence, if $\rho^*$ is an optimal solution of \eqref{equi_LP},  we can derive the optimal policy $f^*$ of \eqref{COP_HD_policy} by the relation  $ \displaystyle 
f^*(s,a) = \displaystyle \frac{\rho^*(s,a)}{\displaystyle\sum_{a \in A(s)}\rho^*(s,a)}$,     $(s,a) \in \mathcal{K},  
 $ if $\displaystyle \sum_{a \in A(s)}\rho^*(s,a) >0$. We choose it arbitrarily if $\displaystyle \sum_{a \in A(s)}\rho^*(s,a) =0$.

\section{Robust CMDP problem}\label{Robust CMDP problem}
As discussed in \Cref{Introduction},  the exact values of the model parameters in a CMDP problem need not always be known in real-life scenarios.  Consequently, solving the problem \eqref{COP_HD_policy} without addressing the uncertainties may provide solutions that are not optimal in practice \cite{mannor2007bias}.
Therefore, in this paper, we study a CMDP problem where the transition probabilities are uncertain and stationary while the running costs are known. We denote the vector consisting of these transition probabilities by  $ \big( p(s' \vert s, a) \big)_{
 \substack{ (s, a, s') \in \mathcal{H}   } }$, where $\mathcal{H} = \{ (s,a,s') \mid (s,a) \in \mathcal{K}, s' \in S \}$.

 As in \cite{ben2009robust},  we delineate the true but uncertain transition probabilities by expressing them as a sum of observed transition probabilities and uncertain parameters. While the former values are fixed from historical data and prior experience, the latter values account for the underlying uncertainties in the transition probabilities. 
We  denote the vector associated with former values by $ \big( \bar{p}(s' \vert s, a) \big)_{
 \substack{ (s, a, s') \in \mathcal{H} } }$ and \VV{outline the relation between these three values as: } 
 \VV{
 \begin{enumerate}[label=(P{\arabic*})]
   \item \label{sum_two_values}  $p(s' \vert s, a) = \bar{p}(s' \vert s, a) +  u(s' \vert s, a), $ $\forall \ (s,a,s') \in \mathcal{H}$, 
   \medskip
\item \label{LB_UB_u} $\begin{aligned}[t]   u(s' \vert s, a) \in 
 [ \ubar{u}(s' \vert s, a), \bar{u}(s' \vert s, a) ]; \ \bar{p}(s' \vert s, a) +  \ubar{u}(s' \vert s, a) \geq 0,  \nonumber \\
 \bar{p}(s' \vert s, a) +  \bar{u}(s' \vert s, a) \leq 1, \ \forall \  (s, a, s') \in \mathcal{H},  \nonumber \end{aligned}$ 
 \medskip
\item \label{sum_0_u} $ \displaystyle \sum_{s' \in S} u(s' \vert s, a) = 0, \ \forall \ (s,a) \in \mathcal{K},$ 
\end{enumerate}
}
\noindent
\VV{where the constraints \crefrange{LB_UB_u}{sum_0_u} ensure that the components of both the vectors $\big( p(s' \vert s, a) \big)_{
 \substack{ (s, a, s') \in \mathcal{H}   } }$ and $\big( \bar{p}(s' \vert s, a) \big)_{
 \substack{ (s, a, s') \in \mathcal{H}   } }$ remain transition probabilities. We observe from  \crefrange{sum_two_values}{sum_0_u} that the underlying uncertain parameter is the vector $ \mathfrak{u} = \big(u(s' \vert s, a)\big)_{(s,a,s') \in \mathcal{H}}$.} Consequently,  we assume that it belongs to an uncertainty set $\mathfrak{U}$.

For a given $f^h \in F_{HD}$ and an initial distribution $\gamma$, we denote the resulting uncertain expected discounted costs by $ C_\alpha(\gamma,f^h,\mathfrak{u}) $ and $ D^k_\alpha(\gamma,f^h,\mathfrak{u}) $,  $k \in \mathbb{K}$. 
 Employing a robust optimization framework, wherein a problem with uncertain parameters is solved under their worst-case realization \cite{ben2009robust,zhen2025unified}, we define a robust CMDP problem under uncertain transition probabilities as
\begin{align}\label{robust_cmdp_with_stationary_policy}
\min_{z, f^h \in F_{HD}} & \ z \nonumber \\
   \textnormal{s.t.}  \ \max_{\mathfrak{u} \, \in \, \mathfrak{U}} & \  C_\alpha(\gamma,f^h,\mathfrak{u}) \leq z, \nonumber\\
   \max_{\mathfrak{u} \, \in \, \mathfrak{U}} & \  D^k_\alpha(\gamma,f^h,\mathfrak{u}) \leq \xi_k, \ \forall \ k \in \mathbb{K}.
\end{align} 
\VV{The above formulation ensures that the resulting optimal policy remains optimal under all worst-case scenarios of the expected discounted costs.}
We define the uncertainty set $\mathfrak{U}$ as an intersection of polyhedral and second-order cone constraints\VV{, wherein the polyhedral constraints explicitly model \crefrange{LB_UB_u}{sum_0_u}}. Unlike \cite{Rankone_TPM}, we do not impose rank-based limitations on the uncertain parameters, and we account for the dependencies between the components of  $\mathfrak{u}$. 
Furthermore, by also incorporating second-order cone constraints, we maintain a sufficiently general uncertainty set, accommodating a broad range of uncertainty structures. This includes the case when the uncertain parameters possibly deviate within a pre-specified ellipsoid.
 We define the  set  $\mathfrak{U}$ as
 \begin{align}\label{general_US_combined}
    \mathfrak{U} = \Big\{ \mathfrak{u} =& \big( u(s' \vert s, a) \big)_{
 \substack{ (s, a, s') \in \mathcal{H}} }  \mid \, B(s) \mathfrak{u}(s) - b(s) \leq 0,   \nonumber \\
&\| M(s)^T \mathfrak{u}(s) + m_0(s) \|_2 \leq m_1(s)^T \mathfrak{u}(s) + m_2(s),  \ \forall \ s \in S \Big\}, 
\end{align}
where $\mathfrak{u}(s) = \big( u(s' \vert s, a) \big)_{
 \substack{ (a, s') \in A(s) \times S} }$, $B(s)$ is an $\ell_p(s) \times (\vert A(s) \vert  \vert S \vert)$-dimensional matrix and $b(s)$ is an $\ell_p(s)$-dimensional vector with the associated constraint \VV{including  \crefrange{LB_UB_u}{sum_0_u} along} with possible additional polyhedral constraints. Furthermore, $M(s)$ is a $ (\vert A(s) \vert  \vert S \vert) \times \ell_{sc}(s)$-dimensional matrix and $m_0(s)$, $m_1(s)$, and $m_2(s)$ are vectors of dimensions $\ell_{sc}(s)$, $\vert A(s) \vert  \vert S \vert$, and $1$, respectively. 
 We construct the set $\mathfrak{U}$ to be $s$-rectangular, i.e.,   the vectors $\mathfrak{u}(s)$ are modeled separately, reflecting the separable structure of the vectors $ \big( p(s' \vert s, a) \big)_{
 \substack{ (a, s') \in A(s) \times S} }$, although their components could be dependent, for each $s \in S$. Under mild assumptions, this type of uncertainty set also allows us to leverage the theory of strong duality to derive an equivalent reformulation of the robust problem \eqref{robust_cmdp_with_stationary_policy}.

 For a given $f \in F_S$,  the costs $C_\alpha(\gamma,f,\mathfrak{u})$ and $D^k_\alpha(\gamma,f,\mathfrak{u})$, $k \in \mathbb{K}$, can be expressed in a matrix form similar to \eqref{statonry_cost}. However, this formulation becomes challenging for a general class of policies, $F_{HD}$. 
Consequently, it becomes difficult to exploit the characteristics of transition probabilities to derive a tractable formulation of \eqref{robust_cmdp_with_stationary_policy}. Therefore, we restrict the problem to the class of stationary policies for simplicity, although this results in a loss of optimality.  
 The example given in Appendix A of \cite{Rankone_TPM} considers a robust CMDP problem wherein a single parameter drives the uncertainties in the transition probabilities from each state. The authors solved this problem restricted to the class of stationary policies by equivalently reformulating it into a \VV{bilinear programming (BP)} problem.  Additionally, they constructed a Markov policy that performs better against the optimal stationary policy in terms of the objective value. As a result, they concluded that this restriction is with loss of optimality.  Since the uncertainty set studied in \cite{Rankone_TPM} is a special case of the uncertainty set we study in this paper, the example and the subsequent conclusion of the loss of optimality holds in our case as well.

 Henceforth,  \eqref{robust_cmdp_with_stationary_policy} refers to the problem restricted to the class of stationary policies. 
 For a given feasible vector $(z,f)$ of \eqref{robust_cmdp_with_stationary_policy}, we show in the following two lemmas that its inner optimization problems under \eqref{general_US_combined} can be equivalently reformulated into SOCP problems, given the following assumption.  
 \begin{assumption}\label{assumption_positive_gamma}
    For each $s \in S$, $\gamma(s)>0. $ 
\end{assumption}
A decision-maker may adopt this assumption when historical data indicates the system can start from any state, and it is widely reflected in studies using 
 a uniform distribution \cite{cmdp_cost_uncertainty,erick,wiesemann2013robust}.
 For a fixed $f \in F_S$, we recall the inner optimization problem in the first constraint of \eqref{robust_cmdp_with_stationary_policy} under \eqref{general_US_combined} as
\begin{subequations} \label{Inner_opt_with_c_combined}
 \begin{align} 
  \max_{ \mathfrak{u}} & \  (1-\alpha) \gamma^T {Q}_f  c_f \nonumber \\
  \textnormal{s.t.}  & \ B(s) \mathfrak{u}(s) - b(s) \leq 0,\ \forall \ s \in S, \label{polyhedral_combined} \\ 
  & \ \| M(s)^T \mathfrak{u}(s) + m_0(s) \|_2 \leq m_1(s)^T \mathfrak{u}(s) + m_2(s),  \ \forall \ s \in S, \label{second_order_combined} 
 \end{align}  
 \end{subequations}
 where $Q_f = \big( I-\alpha P_f \big)^{-1}$ and $P_f$ is a $\vert S \vert \times \vert S \vert$-dimensional matrix under $f$ whose component associated with a transition from a state $s$ to $s'$ is defined by $\displaystyle \sum_{a \in A(s) } f(s, a) {p} (s' \vert s,a) = \sum_{a \in A(s) } f(s, a) \big(\bar{p} (s' \vert s,a) + u(s' \vert s,a)\big)$. 
 \begin{lemma}\label{lemma_equivalent_LP_problem_specific_combined}
Let \Cref{assumption_positive_gamma} hold true. For a fixed $f \in F_S$, \eqref{Inner_opt_with_c_combined} is equivalent to the following SOCP problem.
     \begin{subequations}\label{equivalent_LP_problem_specific_combined}
\begin{align}
    \max_{ \mathfrak{w}_c, \mathfrak{z}_c } & \  \mathfrak{w}_c^T c_f   \nonumber \\ 
    \textnormal{s.t.}  & \ B(s) \mathfrak{z}_c(s) - b(s) w_c(s) \leq 0, \ \forall \ s \in S, \label{polyhedral_combined_multiplied} \\
    & \ \| M(s)^T \mathfrak{z}_c(s) + m_0(s) w_c(s) \|_2 \leq m_1(s)^T \mathfrak{z}_c(s) + m_2(s) w_c(s),  \ \forall \ s \in S,  \label{second_order_combined_multiplied} \\
    & \  \mathfrak{w}_c \geq (1-\alpha) \gamma, \label{positive_w_combined} \\
& \ \mathfrak{w}_c^T (I- \alpha \bar{P}_f)  - \alpha \sum_{s \in S} 
\mathfrak{z}_c(s)^T F_f(s)   = (1-\alpha) \gamma^T,  \label{flow_balance_constr_specific_combined}
\end{align}
\end{subequations}
where $\bar{P}_f$ is a $\vert S \vert \times \vert S \vert$-dimensional matrix whose component associated with a transition from a state $s$ to $s'$ is defined by $\displaystyle \sum_{a \in A(s) } f(s, a) \bar{p} (s' \vert s,a)$. Additionally, \VV{$\mathfrak{z}_c = \big(\mathfrak{z}_c(s)\big)_{s \in S}$,} $\mathfrak{z}_c(s) = \big(z_c(s' \vert s, a)\big)_{(a,s') \in A(s)  \times S}$,   $\mathfrak{w}_c = \big(w_c(s)\big)_{s \in S}$, and $F_f(s)$ is a $(\vert A(s) \vert  \vert S \vert) \times \vert S \vert$-dimensional matrix under the policy $f$, for each $s \in S$. \VV{Specifically, it consists of $\vert A(s) \vert$-times vertically concatenated scalar matrices,  such that the $i^{\text{th}}$ scalar matrix has component  $f(s, a^i)$, $a^i \in A(s)$. } 
 \end{lemma}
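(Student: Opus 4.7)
The plan is to establish the equivalence via a homogenization change of variables that converts the matrix inverse $Q_f = (I - \alpha P_f)^{-1}$ into a linear relation and simultaneously linearizes the bilinear products $w_c(s) u(s' \vert s, a)$ that the $\mathfrak{u}$-dependence of $P_f$ would otherwise introduce. Concretely, I would set $\mathfrak{w}_c := (1-\alpha) Q_f^T \gamma$, equivalently characterized by $\mathfrak{w}_c^T (I - \alpha P_f) = (1-\alpha) \gamma^T$, so that the objective $(1-\alpha) \gamma^T Q_f c_f$ rewrites cleanly as $\mathfrak{w}_c^T c_f$. I then introduce auxiliary variables $z_c(s' \vert s, a) := w_c(s) u(s' \vert s, a)$. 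Expanding $\mathfrak{w}_c^T P_f = \mathfrak{w}_c^T \bar{P}_f + \sum_{s \in S} z_c(\cdot \vert s, \cdot)^T F_f(s)$ inside the relation defining $\mathfrak{w}_c$ yields exactly the flow-balance constraint \eqref{flow_balance_constr_specific_combined}.

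For the forward implication, I would take any feasible $\mathfrak{u}$ of \eqref{Inner_opt_with_c_combined}, define $\mathfrak{w}_c$ and $\mathfrak{z}_c$ by the above formulas, and verify the constraints of \eqref{equivalent_LP_problem_specific_combined}. The flow-balance constraint is immediate from the definition of $\mathfrak{w}_c$, while \eqref{polyhedral_combined_multiplied} and \eqref{second_order_combined_multiplied} follow by scalar multiplication of \eqref{polyhedral_combined} and \eqref{second_order_combined} by $w_c(s) \geq 0$. The subsidiary fact needed for \eqref{positive_w_combined} is the bound $\mathfrak{w}_c \geq (1-\alpha)\gamma$: since $\alpha \in (0,1)$ and $P_f$ is entrywise nonnegative and row-stochastic, the Neumann series $Q_f = \sum_{t=0}^{\infty} \alpha^t P_f^t$ converges componentwise with $Q_f \geq I$ entrywise, so $\mathfrak{w}_c = (1-\alpha) Q_f^T \gamma \geq (1-\alpha)\gamma$. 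Combined with \Cref{assumption_positive_gamma}, this gives $w_c(s) \geq (1-\alpha)\gamma(s) > 0$ for every $s \in S$.

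For the reverse implication, given a feasible $(\mathfrak{w}_c, \mathfrak{z}_c)$ of \eqref{equivalent_LP_problem_specific_combined}, I would define $u(s' \vert s, a) := z_c(s' \vert s, a)/w_c(s)$, which is well-defined because \eqref{positive_w_combined} together with \Cref{assumption_positive_gamma} forces $w_c(s) > 0$. Dividing \eqref{polyhedral_combined_multiplied} and \eqref{second_order_combined_multiplied} through by $w_c(s)$ recovers \eqref{polyhedral_combined} and \eqref{second_order_combined}, so $\mathfrak{u}$ is feasible for \eqref{Inner_opt_with_c_combined}. Substituting $z_c = w_c \cdot u$ back into \eqref{flow_balance_constr_specific_combined} collapses that equation to $\mathfrak{w}_c^T(I - \alpha P_f) = (1-\alpha)\gamma^T$; invertibility of $I - \alpha P_f$ (standard because $\alpha \in (0,1)$ and $P_f$ is row-stochastic) then forces $\mathfrak{w}_c^T = (1-\alpha) \gamma^T Q_f$, so $\mathfrak{w}_c^T c_f = (1-\alpha)\gamma^T Q_f c_f$ and the objective values coincide.

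The most delicate point, and the main obstacle, is the careful handling of the strict positivity of $w_c(s)$. Without \Cref{assumption_positive_gamma}, constraint \eqref{positive_w_combined} would only supply $w_c(s) \geq 0$, which would make the division step in the reverse direction fail at states with $w_c(s) = 0$ and could admit spurious feasible points of \eqref{equivalent_LP_problem_specific_combined} with no preimage under the homogenization. Pinning down $w_c(s) > 0$ via \Cref{assumption_positive_gamma} is therefore precisely what makes the homogenization bijection valid and closes the equivalence.
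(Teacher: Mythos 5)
Your proposal is correct and follows essentially the same route as the paper: the same homogenization $w_c(s)=(1-\alpha)\gamma^T Q_f e_s$, $z_c = w_c\cdot u$, the same multiplication/division by $w_c(s)>0$ for the polyhedral and second-order cone constraints, and the same collapse of the flow-balance equation in both directions. The only cosmetic difference is that you justify $Q_f \geq I$ via the Neumann series for the row-stochastic $P_f$, whereas the paper invokes an inverse-M-matrix comparison theorem; both yield the same bound.
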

\begin{proof}
 Let $f \in F_S$ be fixed. 
For a given feasible vector $\mathfrak{u}$ of \eqref{Inner_opt_with_c_combined}, we define a vector 
$(\mathfrak{w}_c, \mathfrak{z}_c )$  with   
 \begin{align} w_c(s) &=  (1-\alpha) \gamma^T Q_f e_s , \ \forall \ s \in S, \label{definition_w_c_combined}\\
 z_c(s' \vert s, a) &= (1-\alpha)\gamma^TQ_f e_s u(s' \vert s,a), \ \forall \ (s,a,s') \in \mathcal{H}. \label{definition_z_c_combined}  
 \end{align} 
 Since   $I $ and $I - \alpha P_f$ are M-matrices, it follows from Theorem 1.8 of  \cite{johnson2011inverse} that $Q_f \geq I$, thus \eqref{positive_w_combined} is  satisfied from \eqref{definition_w_c_combined}. From \Cref{assumption_positive_gamma}, $\mathfrak{w}_c > 0$, thus,  multiplying \eqref{polyhedral_combined} and \eqref{second_order_combined} by $w_c(s)$ and using \eqref{definition_z_c_combined} for each $s \in S$, we obtain \eqref{polyhedral_combined_multiplied} and \eqref{second_order_combined_multiplied}, respectively.    Furthermore, \VV{using a component-wise formulation of $Q_f$} in   \eqref{definition_w_c_combined},  we obtain   
\begin{align}\label{FB_derivation_combined}
    \mathfrak{w}_c^T (I- \alpha \bar{P}_f) e_{s'} - (1-\alpha) \gamma(s') & = \alpha \sum_{s \in S} w_c(s) \left( \sum_{a \in A(s)} f(s,a) u(s' \vert s,a) \right) \nonumber \\ 
    & = \alpha \sum_{s \in S}  \left( \sum_{a \in A(s) } f(s,a) z_c(s' \vert s,a) \right) \nonumber \\ 
    & = \alpha \sum_{s \in S}   \mathfrak{z}_c(s)^T F_f(s) e_{s'},
\end{align}
\VV{where $e_{s'}$ denotes the standard unit vector with 1 at $s'^{\text{th}}$-component and the last equality follows by the definition of $F_f(s)$. Consequently, the matrix formulation of \eqref{FB_derivation_combined}}
 yields \eqref{flow_balance_constr_specific_combined}. Additionally, using  $\eqref{definition_w_c_combined}$, $
     (1-\alpha) \gamma^T {Q}_f  c_f = \mathfrak{w}_c^T c_f$.  Hence, $ ( \mathfrak{w}_c, \mathfrak{z}_c  )$
is a feasible vector of  \eqref{equivalent_LP_problem_specific_combined}.  Conversely,   for a given feasible vector $ ( \mathfrak{w}_c,\mathfrak{z}_c   )$ of \eqref{equivalent_LP_problem_specific_combined}, we define a vector $\mathfrak{u} $ with $u(s' \vert s,a) =\displaystyle  \frac{z_c(s' \vert s,a)}{w_c(s)}$,  $(s,a,s') \in \mathcal{H}$. Using this definition in \eqref{flow_balance_constr_specific_combined} along with arguments analogous to \eqref{FB_derivation_combined}, we obtain  
\begin{align*}
    w_c(s) &=   (1-\alpha) \gamma^T Q_f e_s , \ \forall \ s \in S, 
\end{align*}
thus, $
    \mathfrak{w}_c^T c_f = (1-\alpha) \gamma^T {Q}_f  c_f$. 
 From \Cref{assumption_positive_gamma}, $\mathfrak{w}_c > 0$, thus, dividing \eqref{polyhedral_combined_multiplied} and \eqref{second_order_combined_multiplied} by $w_c(s)$, for each $s \in S$, we obtain \eqref{polyhedral_combined} and \eqref{second_order_combined}, respectively. Hence, $\mathfrak{u}$ is a feasible vector of \eqref{Inner_opt_with_c_combined}. 
\end{proof}
Similar to the preceding lemma, SOCP reformulations hold for all other inner optimization problems in \eqref{robust_cmdp_with_stationary_policy}. For completeness, we summarize these reformulations in the following lemma without proof.
\begin{lemma}\label{lemma_equivalent_LP_problem_specific_combined_d}
Let \Cref{assumption_positive_gamma} hold true. For a fixed $f \in F_S$, the inner optimization problem in the last constraint of \eqref{robust_cmdp_with_stationary_policy} under \eqref{general_US_combined} is equivalent to the following SOCP problem for each $k$.
\begin{align}\label{equivalent_LP_problem_specific_combined_d}
    \max_{ \mathfrak{w}_{d^k}, \mathfrak{z}_{d^k} } & \  \mathfrak{w}_{d^k}^T {d}^k_f   \nonumber \\ 
    \textnormal{s.t.}  & \ B(s) \mathfrak{z}_{d^k}(s) - b(s) w_{d^k}(s) \leq 0, \ \forall \ s \in S, \nonumber \\
    & \ \| M(s)^T \mathfrak{z}_{d^k}(s) + m_0(s) w_{d^k}(s) \|_2 \leq m_1(s)^T \mathfrak{z}_{d^k}(s) + m_2(s) w_{d^k}(s),  \ \forall \ s \in S,  \nonumber \\
    & \  \mathfrak{w}_{d^k} \geq (1-\alpha) \gamma, \nonumber \\
& \ \mathfrak{w}_{d^k}^T (I- \alpha \bar{P}_f)  - \alpha \sum_{s \in S} 
\mathfrak{z}_{d^k}(s)^T F_f(s)   = (1-\alpha) \gamma^T, 
\end{align}
where  $\mathfrak{z}_{d^k}(s) $ and $\mathfrak{w}_{d^k}$ are defined similarly to \Cref{lemma_equivalent_LP_problem_specific_combined}. 
 \end{lemma}
 Using the SOCP reformulations derived in the above lemmas, we provide an equivalent reformulation of \eqref{robust_cmdp_with_stationary_policy} into \VV{an SOCP problem with bilinear constraints under the following assumption.} 
\begin{assumption}\label{Strict_feasibility_assumption}
    The region determined by the second-order cone constraint in the set $\mathfrak{U}$ defined in \eqref{general_US_combined} is strictly feasible. 
\end{assumption}
When the inner optimization problems in \eqref{robust_cmdp_with_stationary_policy} are substituted with equivalently reformulated SOCP problems derived as in \Crefrange{lemma_equivalent_LP_problem_specific_combined}{lemma_equivalent_LP_problem_specific_combined_d}, 
the above assumption ensures that strong duality in these problems holds, allowing us to derive an equivalent reformulation of \eqref{robust_cmdp_with_stationary_policy} as a minimization problem \cite{boyd2004convex}. 
\begin{theorem}\label{Thm:robust_cmdp_BP_SOCP_formulation}
 Let \Crefrange{assumption_positive_gamma}{Strict_feasibility_assumption} hold true.     The robust CMDP problem \eqref{robust_cmdp_with_stationary_policy} with $\mathfrak{U}$ defined by \eqref{general_US_combined}, is equivalent to the following problem. 
 \begin{subequations}\label{robust_cmdp_BP_SOCP_formulation}
    \begin{align}
&\min_{z, f, \mathfrak{d}_c, \mathfrak{d}_{d^k}}  \ z \nonumber \\
   \textnormal{s.t.}   & \  (1-\alpha ) \gamma^T (\varsigma_c  - \eta_c) \leq z,  \\
  &\  c_f(s) + \beta_c(s)^T  b(s) + \mu_c(s)^T m_0(s) + \theta_c(s) m_2(s) + \eta_c(s) - e_s^T (I- \alpha \bar{P}_f)  \varsigma_c = 0, \ \forall \ s \in S,  \label{bilinear_c_constraint} \\ 
  & \ B(s)^T\beta_c(s)  - M(s)\mu_c(s)  - \theta_c(s)m_1(s) - \alpha F_f(s)\varsigma_c  = 0, \ \forall \ s \in S, \label{bilinear_c_constraint_2}  \\ 
  & \ \| \mu_c(s) \|_2 \leq \theta_c(s),  \ \beta_c(s) \geq 0, \ \eta_c \geq 0, \ \forall \ s \in S, \label{SOC_c_constraints} \\
    & \  (1-\alpha ) \gamma^T (\varsigma_{d^k}  - \eta_{d^k}) \leq \xi_k, \ \forall \ k \in \mathbb{K},  \\
  &\ 
  \begin{aligned}[b]
  d^k_f(s) + \beta_{d^k}(s)^T  b(s) + \mu_{d^k}(s)^T m_0(s) + \theta_{d^k}(s) m_2(s) + \eta_{d^k}(s) - e_s^T (I- \alpha \bar{P}_f)  \varsigma_{d^k} = 0, \\  \forall \ s \in S, \ k \in \mathbb{K}, \end{aligned} \label{bilinear_dk_constraint} \\ 
  & \ B(s)^T\beta_{d^k}(s)  - M(s)\mu_{d^k}(s)  - \theta_{d^k}(s)m_1(s) - \alpha F_f(s)\varsigma_{d^k}  = 0, \ \forall \ s \in S, \ k \in \mathbb{K}, 
  \label{bilinear_dk_constraint_2} \\ 
  & \ \| \mu_{d^k}(s) \|_2 \leq \theta_{d^k}(s),  \ \beta_{d^k}(s) \geq 0, \ \eta_{d^k} \geq 0, \ \forall \ s \in S,\ k \in \mathbb{K},  \label{SOC_dk_constraints}
\end{align} 
\end{subequations}
where $c_f(s) = \displaystyle \sum_{a \in A(s)} f(s,a) c(s,a)$. We similarly define $d^k_f(s)$,  $s \in S$. Additionally,  $\mathfrak{d}_c = \big( \big(\beta_c(s)\big)_{s \in S},  \big(\theta_c(s)\big)_{s \in S} , \allowbreak \big(\mu_c(s)\big)_{s \in S},  \eta_c, \varsigma_c\big)$ is  a dual vector such that $\beta_c(s)$ and $\mu_c(s)$ have dimensions $\ell_p(s)$ and $\ell_{sc}(s)$, respectively, $\eta_c$ and $\varsigma_c$ are $\vert S \vert$-dimensional vectors, and $\theta_c(s)$ is a  scalar. We similarly define $\mathfrak{d}_{d^k}$, $k \in \mathbb{K}$. 
\end{theorem}
\begin{proof}
    For a fixed  $z$ and $f \in F_s$, it follows from \Crefrange{lemma_equivalent_LP_problem_specific_combined}{lemma_equivalent_LP_problem_specific_combined_d} that the inner optimization problems of \eqref{robust_cmdp_with_stationary_policy} can be equivalently reformulated into SOCP problems  \eqref{equivalent_LP_problem_specific_combined} and \eqref{equivalent_LP_problem_specific_combined_d}. The dual problem of \eqref{equivalent_LP_problem_specific_combined} is given by  (see \Cref{dual_formulation_first_constr} of \Cref{Deriving_dual_appendix})
    \begin{align} \label{equivalent_formulation_dual_first_constr_within_ppr}
  \min_{\mathfrak{d}_c } &\ (1-\alpha ) \gamma^T (\varsigma_c  - \eta_c) \nonumber \\
  \textnormal{s.t.} & \ \text{\Crefrange{bilinear_c_constraint}{SOC_c_constraints}}. 
\end{align}
For each $k$, we derive the dual problem of \eqref{equivalent_LP_problem_specific_combined_d} similarly.
Furthermore, it follows from \Cref{Strict_feasibility_assumption} that the optimal values of \eqref{equivalent_LP_problem_specific_combined} and \eqref{equivalent_formulation_dual_first_constr_within_ppr} are equal and a similar result holds for  \eqref{equivalent_LP_problem_specific_combined_d}. 
 Therefore, by substituting these dual problems in \eqref{robust_cmdp_with_stationary_policy}, we obtain \eqref{robust_cmdp_BP_SOCP_formulation}. 
\end{proof}
We obtain second-order cone constraints from the first constraints of \eqref{SOC_c_constraints} and \eqref{SOC_dk_constraints} and bilinear constraints from \Crefrange{bilinear_c_constraint}{bilinear_c_constraint_2} and \Crefrange{bilinear_dk_constraint}{bilinear_dk_constraint_2} with bilinear terms   $e_s^T  \bar{P}_f \varsigma_c$, $F_f(s)\varsigma_c$ and $e_s^T \bar{P}_f  \varsigma_{d^k}$, $F_f(s)\varsigma_{d^k}$, respectively. Although \eqref{robust_cmdp_BP_SOCP_formulation} is a non-convex programming problem,  it can be solved using global solvers such as  Gurobi \cite{gurobi}.
\begin{remark}\label{irreducibility_remark}
The \Cref{assumption_positive_gamma} ensures that the vectors $\mathfrak{w}_c > 0$ and $\mathfrak{w}_{d^k} > 0$, $k \in \mathbb{K}$, in \eqref{equivalent_LP_problem_specific_combined} and \eqref{equivalent_LP_problem_specific_combined_d}, respectively. These positivity constraints enable us to derive equivalent SOCP reformulations of the inner optimization problems of \eqref{robust_cmdp_with_stationary_policy} in \Crefrange{lemma_equivalent_LP_problem_specific_combined}{lemma_equivalent_LP_problem_specific_combined_d}. Notably, these constraints also hold under an alternative  assumption that $\gamma^T \big(I - \alpha P_{\min}\big)^{-1} >0$, 
     where $P_{\min}$ is a $\vert S \vert \times \vert S \vert$-dimensional matrix whose component associated with a transition from a state $s$ to $s'$ is defined by $ \displaystyle 
 p_{\min}(s' \vert s) =
\min_{a \in A(s)}\big( \bar{p}(s' \vert s, a) +  \ubar{u}(s' \vert s, a)\big)$,  for all  $s,s' \in S $. Similar to Lemma 3.8 and Remark 3.9 of \cite{Rankone_TPM}, a sufficient condition under which this assumption holds true is  $e_s^T \big( I- \alpha P_{\min} \big)^{-1} > 0$, for all $s\in S$. This condition implies that under any stationary deterministic policy, the Markov chain of the system is irreducible. In \Cref{lemma_alternative_equivalent_LP_problem_specific_combined} of \Cref{Alternative_equivalence_appendix}, we show that the alternative assumption also leads to SOCP reformulations of the inner optimization problems in \eqref{robust_cmdp_with_stationary_policy}.  We derive the consequent equivalent reformulation of \eqref{robust_cmdp_with_stationary_policy} as a minimization problem in \Cref{Theorem_duality_bilinear_socp} of \Cref{Alternative_equivalence_appendix}.    
\end{remark}

\section{Machine replacement problem}\label{Machine replacement problem}
We perform numerical experiments on a variant of a machine replacement problem described in Section 5.2 of \cite{erick}. The problem aims to derive an optimal policy to decide whether to repair a machine or not at its various possible states. In general, determining the optimal value and policy before a system initiates can offer valuable insights into the structure of the system. 
Moreover, by incorporating potential uncertainties in the model parameters upfront, we can derive a robust policy that remains effective across all possible parameter variations.
 We describe the system as follows: Consider a factory owner who owns a number of machines, and each machine can be in one of a finite set of states that represents its working state. The set of states is denoted by $S = \{s^1, s^2, \ldots, s^n\}$, where  $s^1$ to $s^{n-2}$ represent the increasing age of the machine, while  $s^{n-1}$ and $s^n$ represent the minor and the major repaired states, respectively.  At each state, the owner chooses an action from the set $A= \{a^1: `\text{do not repair'}, a^2: `\text{repair'}\}$.  According to an underlying transition probability, the machine then moves to another state, and the process continues for an infinite horizon. Motivated from  Figure 3 of \cite{erick}, we assume that the observed transition probabilities in a matrix form with $n=7$ are given as in \eqref{observed_TPMs}. For a higher number of states, the transition probabilities are similarly fixed; for any $n \geq 7$, the transition probabilities from the set of states $\{ s^2, s^3, \ldots, s^{n-4} \}$ are the same as the transition probabilities from $s^2$ while the transition probabilities from $s^1$, $s^{n-3}$, $s^{n-2}$, $s^{n-1}$, and $s^n$ are the same as the transition probabilities from $s^1$, $s^4$, $s^5$, $s^6$, and $s^7$, respectively.

 \small{
    \begin{align}
     \bar{P}(a^1)  & = \hspace{0.2cm}  \bordermatrix{     
            & s^1 & s^2 & s^3 & s^4 & s^5 & s^6 & s^7 \cr
    s^1 & 0.3 & 0.6 & 0 & 0 & 0 & 0.1 & 0 \cr
    s^2 & 0.05 & 0.2 & 0.6 & 0.05 & 0 & 0.1 & 0 \cr
    s^3 & 0 & 0.05 & 0.2 & 0.6 & 0.05 & 0.1 & 0 \cr
    s^4 & 0 & 0 & 0.1 & 0.2 & 0.6 & 0.1 & 0 \cr
    s^5 & 0 & 0 & 0 & 0.1 & 0.8 & 0 & 0.1 \cr
    s^6 & 0.8 & 0 & 0 & 0 & 0 & 0.2 & 0 \cr
    s^7 & 0 & 0 & 0 & 0 & 0.1 & 0.1 & 0.8 \cr
}, \nonumber \\
\bar{P}(a^2) & = \hspace{0.2cm}
\bordermatrix{     
            & s^1 & s^2 & s^3 & s^4 & s^5 & s^6 & s^7 \cr
    s^1 & 0.7 & 0.3 & 0 & 0 & 0 & 0 & 0 \cr
    s^2 & 0.7 & 0.15 & 0.05 & 0.05 & 0 & 0.05 & 0 \cr
    s^3 & 0 & 0.7 & 0.15 & 0.05 & 0.05 & 0.05 & 0 \cr
    s^4 & 0 & 0 & 0.7 & 0.2 & 0.05 & 0.05 & 0 \cr
    s^5 & 0 & 0 & 0 & 0.7 & 0.25 & 0 & 0.05 \cr
    s^6 & 0.1 & 0 & 0 & 0 & 0 & 0.9 & 0 \cr
    s^7 & 0.05 & 0 & 0 & 0 & 0 & 0.6 & 0.35 \cr
    }. \label{observed_TPMs}
    \end{align}}
    \normalsize
For each action chosen at a state, the owner incurs two running costs, namely, working cost and opportunity cost. The former cost is incurred according to the working state of the machine, while the latter cost is incurred due to the production of inferior quality goods when the owner chooses $a^1$ and due to a production loss when the owner chooses $a^2$ \cite{cmdp_cost_uncertainty}. The owner aims to derive an optimal policy by minimizing the expected discounted working cost while ensuring the expected discounted opportunity cost stays below an upper bound value. Since all machines are identical, the same policy can be applied to all the machines; hence, we consider a single machine.

From \eqref{observed_TPMs}, we observe that when the owner chooses $a^1$, the machine moves to its next state with a higher probability when its present state belongs to the set $\{s^1, s^2, \ldots, s^{n-3}\}$. At states $s^{n-2}$ and $s^{n}$, it stays in the same state with a higher probability since they represent poor and major repaired states, respectively. 
While these observations are consistent with an expected scenario,
the instance where the machine moves from $s^{n-1}$ to $s^1$ with a higher probability is contradictory. Since the owner does not repair the machine, it is likely to remain in the same state or move to $s^n$ with a higher probability. On the other hand, when the owner chooses $a^2$,   the machine moves to its previous state with a higher probability when its present state belongs to the set $\{s^2, s^3, \ldots, s^{n-2}, s^n\}$. However, contrary to the observation at $s^{n-1}$, the machine may move to a better state with a higher probability since the owner repairs the machine.  
Moreover, as observed at $s^n$, it may move to $s^1$ from all other states with a positive probability. 
Consequently, the observed transition probabilities obtained in \eqref{observed_TPMs} may not represent the true values, indicating possible deviations in the true but uncertain transition probabilities. Thus, we define a general uncertainty set $\mathfrak{U}$ in \eqref{general_US_combined} as
\begin{subequations} \label{Uncertainty_set_MR}
\begin{align} 
\mathfrak{U} =&   
     \Big\{ \mathfrak{u} = \big( u(s^j \vert s^i, a^m) \big)_{
 \substack{ (s^i, a^m, s^j) \in \mathcal{H}} }  \vert \nonumber \\
  & \begin{aligned}[b]
   u(s^1 \vert s^{n-1},a^1 ) \in [-0.5, 0], \ u(s^{n-1} \vert s^{n-1},a^1 ) \in [-0.1, 0.1], \ u(s^{n} \vert s^{n-1},a^1 ) \in \\ [0, 0.6],  
   u(s^1 \vert s^{n-1},a^2 ) \in [0, 0.7], \  u(s^{n-1} \vert s^{n-1},a^2 ) \in [-0.8, 0], \\ 
   u(s^1 \vert s^{n-1}, a^1) + 2u(s^{n-1} \vert s^{n-1}, a^1) + 5u(s^1 \vert s^{n-1}, a^2) \leq 1,
   \end{aligned} \label{constr_n-1_state}
      \\
  &  \begin{aligned}[t]
       u(s^{1} \vert s^{i},a^2 ) \in [0, 0.7], \ u(s^{i-1} \vert s^{i},a^2 ) \in [-0.4, 0.1], \ \forall \ i \in \{ 3,4,\ldots,n-2\}, 
   \end{aligned}  \label{constr_3_n_2_state} \\ 
& \begin{aligned}[b]
u(s^j \vert s^i ,a^m) \in \big[ -\sigma  \bar{p}(s^j \vert s^i ,a^m), \sigma( 1- \bar{p}(s^j \vert s^i ,a^m)) \big], \ \forall \ (s^i, a^m, s^j) \in \\
\mathcal{H} \backslash \text{\crefrange{constr_n-1_state}{constr_3_n_2_state}}
\end{aligned} \label{constr_all_states_scale}, \\
& \begin{aligned}[t]
\sum_{j=1}^n u (s^j \vert s^i, a^m) = 0, \ \forall \ (s^i, a^m) \in \mathcal{K}, 
\end{aligned} \label{constr_sum_0} \\
& \begin{aligned}[t]
\|  \mathfrak{u}(s^i)   \|_2 \leq    m_2(s^i),  \ \forall \ s^i \in S
\end{aligned}     
\Big\},   \label{Norm_constraint}
\end{align}
\end{subequations}
where $\sigma \in [0,1]$ is a scale parameter that controls the deviation of transition probabilities from their observed values. We analyze the sensitivity of the optimal values to variations in the constraints of the robust CMDP problem \eqref{robust_cmdp_with_stationary_policy}. We solve all our problems using Gurobi solver \cite{gurobi} of YALMIP toolbox \cite{Lofberg2004} in MATLAB on an Intel(R) 64-bit Core(TM) i5-1240P CPU @ 1.70GHz with 16.0 GB RAM machine.

{\scriptsize\tabcolsep = 1.0pt 
\renewcommand\arraystretch{1.5} 
\begin{longtable}{@{\extracolsep{1.5pt}}p{0.8cm}p{0.8cm}p{3cm}p{1.3cm}p{1cm}p{1cm}p{1cm}p{0.7cm}p{0.7cm}p{0.7cm}p{0.7cm}@{}}
\caption{Optimal values/gaps and policies under $n = 7$.}\\
\hline
$\sigma$ & $m_2(s)$ & $z^*$/[LB, UB] & Gap($\%$)& \multicolumn{7}{c}{Probability of Repair} \\ 
 \cline{5-11}
& &  &  & $s^1$ & $s^2$ & $s^3$ & $s^4$ & $s^5$ & $s^6$ & $s^7$ \\
  \hline
\endfirsthead
\caption{Optimal values/gaps and policies under $n = 7$.}\\
\hline
$\sigma$ & $m_2(s)$ & $z^*$/[LB, UB] & Gap($\%$)& \multicolumn{7}{c}{Probability of Repair} \\
 \cline{5-11}
& &  &  & $s^1$ & $s^2$ & $s^3$ & $s^4$ & $s^5$ & $s^6$ & $s^7$ \\ 
  \hline
  \endhead
\midrule
    \multicolumn{11}{r}{\footnotesize\itshape Continued on the next page}
\endfoot
\endlastfoot
 0&	-&	84.9511&	0&	0.7649&	0 &	1 &	1 &	1 &	1 &	1 
 \\ 
 \hline 
 0.01&	-&	92.7133&	0&	0.7082&	0 &	1 &	1 &	1 &	1 &	1  \\ 
 &0.01&	90.0318&	0&	0.6448&	0.1445&	1 &	1 &	1 &	1 &	1  \\
 &0.1&	92.7133&	0&	0.7082&	0 &	1 &	1 &	1 &	1 &	1  \\
 &0.5&	[86.8082, 92.7144]&	6.3703&	0.7081&	0 &	1 &	1 &	1 &	1 &	1  \\ 
 \hline 
 0.03&	-&	107.9344&	0&	0 &	0.8307&	1 &	1 &	1 &	1 &	1  \\
 &0.01&	90.2866&	0&	0.6123&	0.1952&	1 &	1 &	1 &	1 &	1  \\
 &0.1&	[107.5718, 107.7116]&	0.1298&	0.5591&	0.062&	1 &	1 &	1 &	1 &	1  \\
 &0.5&	[96.7416, 107.9377]&	10.3727&	0.0004&	0.83 &	1 &	1 &	1 &	1 &	1  \\ 
 \hline
 0.05&	-&	122.5219&	0&	0 &	0.6745&	1 &	1 &	1 &	1 &	1  \\
 &0.01&	[73.7853, 90.3796]&	18.3607&	0.6088&	0.1994&	1 &	1 &	1 &	1 &	1  \\
 &0.1&	[105.0759, 120.2469]&	12.6166&	0.2865&	0.3111&	1 &	1 &	1 &	1 &	1  \\ 
 &0.5&	[102.6804, 122.5263]&	16.1972&	0 &	0.6743&	1 &	1 &	1 &	1 &	1  \\
 \hline
 0.07&	-&	137.5278&	0&	0 &	0.5075&	1 &	1 &	1 &	1 &	1  \\ 
 &0.01&	[78.3563, 90.4313]&	13.3527&	0.607&	0.2016&	1 &	1 &	1 &	1 &	1  \\
 &0.1&	[110.0556, 128.3822]&	14.275&	0.3201&	0.159&	1 &	1 &	1 &	1 &	1  \\ 
 &0.5&	[113.6387, 137.531]&	17.3723&	0 &	0.5073&	0.9999&	1 &	1 &	1 &	1  \\ 
 \hline 
 0.1&	-&	[157.8017, 160.0454]&	1.4019&	0 &	0.2223&	0.1196&	1 &	1 &	1 &	1  \\ 
 &0.01&	[77.0855, 90.4388]&	14.765&	0.6059&	0.2031&	1 &	1 &	1 &	1 &	1  \\ 
 &0.1&	[114.9212, 132.5782]&	13.3182&	0.3365&	0.0889&	1 &	1 &	1 &	1 &	1  \\ 
 &0.5&	[158.9444, 160.0507]&	0.6912&	0 &	0.2225&	0.162 &	1 &	1 &	1 &	1  \\ 
 \hline
 0.3 &	- & Inf& -&-&-&-&-&-&-&- \\
 &0.01&	[84.6690, 90.4405]&	6.3816&	0.6058&	0.2032&	1&	1&	1&	1&	1\\
 &0.1&	[123.9431, 136.9922]&	9.5255&	0.3249&	0.0355&	1&	1&	1&	1&	1\\
 &0.5 & Inf& -&-&-&-&-&-&-&- \\
\hline
\footnotemark
 \label{table_CPU_time_BP}
 \end{longtable}
 \footnotetext{`Inf' indicates an infeasible problem; $\text{Gap}(\%) = \displaystyle \frac{\vert \text{LB-UB} \vert}{\vert \text{UB} \vert} \times 100. $ }
}
\begin{figure}[hbt!]
    \centering
    \begin{subfigure}[c]{0.5\linewidth}
        \centering
    \includegraphics[scale=0.28]{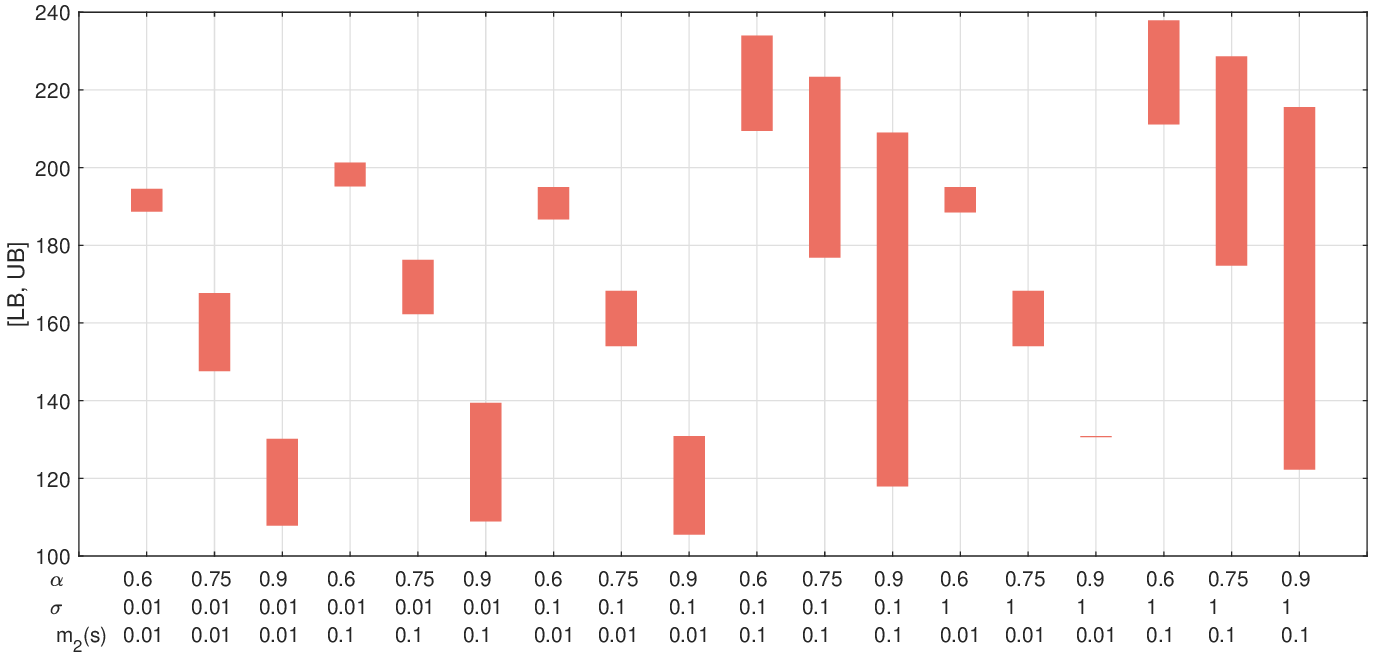}
    \caption{\footnotesize $\vert S \vert = 10$.  }
\label{Bounds_Plot_10_States}
 \end{subfigure}%
    \begin{subfigure}[c]{0.5\textwidth} 
    \centering
      \includegraphics[scale=0.28]{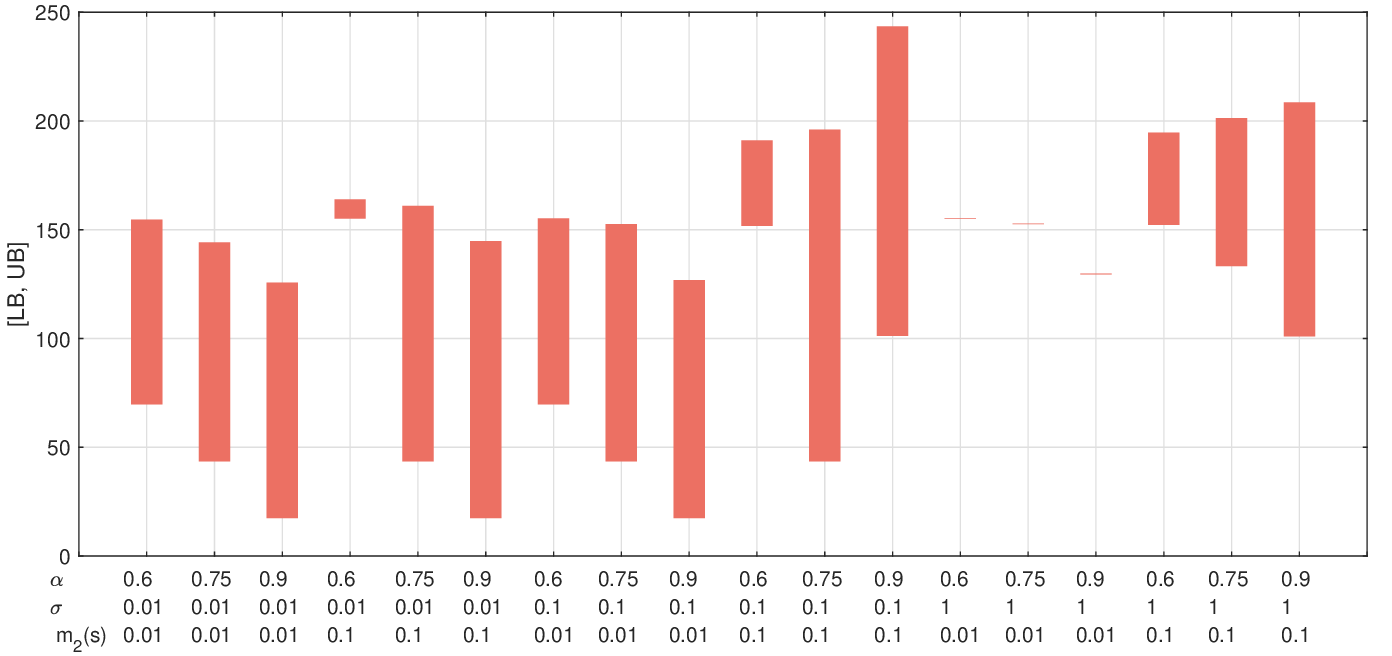}
    \caption{\footnotesize $\vert S \vert = 25$.  }
    \label{Bounds_Plot_25_States}
    \end{subfigure}
   \vspace*{-1mm}
    \caption{Lower and upper bounds of optimal values.}
   \label{Bounds_Plot}
\end{figure}
As in \cite{Rankone_TPM}, we fix $n=7$,  $\alpha=0.6$, and assume the system starts at $s^1$. Due to the latter assumption, we solve the alternative equivalent problem \eqref{robust_cmdp_BP_SOCP_formulation_alternate_reformulation} of the robust CMDP problem \eqref{robust_cmdp_with_stationary_policy} under a wall-clock time limit of $14400$ seconds. The running cost vectors are fixed as:
\begin{align*}
  & \big( c(s) \big)_{s \in S}  = (61.08,
62.17, 144.44, 174.36, 800, 300, 600), \\
 & \big( d^1(s,a^1) \big)_{s \in S}  = (113.64, 154.73, 173.2, 191.32,600,800,900
), \\ 
 & \big( d^1(s,a^2) \big)_{s \in S}  = (179.33,269.52,189.51,258.9,200,250,350).
\end{align*}
We fix $\xi_1=170$ and vary the values of $\sigma$ and $\big( m_2(s) \big)_{s \in S}$,  keeping the components of $m_2(s)$ constant. 
We summarize the optimal values (or the gap between the lower and upper bounds of the optimal values, denoted by LB and UB, respectively) and the associated policies (or the feasible policies) in \Cref{table_CPU_time_BP}. The first row of the table considers the constraints \eqref{constr_n-1_state} and \eqref{constr_sum_0} of $\mathfrak{U}$ and it is identical to the problem considered in Section 4.1 of \cite{Rankone_TPM}. The other rows with no value of  $m_2(s)$ consider the constraints \crefrange{constr_n-1_state}{constr_sum_0}, while all the remaining rows consider the constraints \crefrange{constr_n-1_state}{Norm_constraint}. We observe from the rows with 
 no value of $m_2(s)$ that the optimal values increase with $\sigma$ since the feasible region of each inner optimization problem in \eqref{robust_cmdp_with_stationary_policy} enlarges with $\sigma$. However, for other values of $m_2(s)$, this remains inconclusive due to the time limit.  
On the other hand, for $\sigma \in \{0.1, 0.3\}$, the values of LB and UB indicate that the optimal values are highest with no value of $m_2(s)$,  decrease at $m_2(s)=0.01$ and then increase with $m_2(s)$. This is because when $m_2(s)$ moves to $0.01$, the constraints \eqref{Norm_constraint} are introduced in addition to the existing constraints \crefrange{constr_n-1_state}{constr_sum_0} in $\mathfrak{U}$. As $m_2(s)$ increases, the feasible region of each inner optimization problem in \eqref{robust_cmdp_with_stationary_policy} enlarges, thereby increasing the optimal values. This result is inconclusive for other values of $\sigma$ due to the time limit. Irrespective of the policy being optimal or feasible, we conclude that repairs must be made in the $4$ states.

We increase the value of $\vert S \vert$ and vary the values of $\alpha$, $\sigma$, and $\big( m_2(s) \big)_{s \in S}$, keeping the components of $m_2(s)$ constant, and assume that $\gamma$ is uniformly distributed. We solve \eqref{robust_cmdp_BP_SOCP_formulation} under a wall-clock
 time limit of  $7200$ seconds.  We randomly generate increasing values of $\big( c(s) \big)_{s \in S}$, $\big( d^1(s,a^1) \big)_{s \in S} $, and $\big( d^1(s,a^2) \big)_{s \in S}$ at the first $n-3$ states, with values from the intervals $(60, 180)$, $(110, 200)$, and $(175, 260)$, respectively.   The values at the last $3$ states are identical to those at the last $3$ states when $n=7$. We fix $\xi_1=300$ and summarize the  lower and upper bounds of the optimal values for $\vert S \vert \in \{10, 25\}$ in \Cref{Bounds_Plot}.   
 We obtain optimal values for a few instances at $(\sigma, m_2(s)) = (1, 0.01)$. \VV{For other instances, the gap ranges between $2.8794$ and $43.5125$ for $\vert S \vert = 10$, with 7 instances exhibiting a gap below 10. When $\vert S \vert = 25$, the gap ranges between $5.2907$ and  $87.9458$ with only 1 instance exhibiting a gap below 10. This indicates reduced solver performance with an increasing value of $\vert S \vert$. } While in some instances, the values of LB and UB indicate that the optimal values decrease with $\alpha$ and increase with $m_2(s)$, in other instances, this remains inconclusive due to the time limit.

\section{Conclusion}\label{Conclusion}
We investigate a CMDP problem characterized by uncertain transition probabilities and consider a robust optimization framework.  We assume that these probabilities can be expressed as a sum of observed transition probabilities and uncertain parameters and construct a generalized set for the uncertain parameters consisting of polyhedral and second-order cone constraints. We present a novel approach to equivalently transform the inner optimization problems of our robust model into SOCP problems, thereby reformulating the overall robust CMDP problem into \VV{an SOCP problem with bilinear constraints, allowing us} to solve the problem using Gurobi. 
In the numerical experiments, we observe that some instances yield optimal values while others provide lower and upper bounds within a pre-specified time limit\VV{, with a relatively higher gap for a larger number of states.} \Cref{table_CPU_time_BP} and \Cref{Bounds_Plot} do not indicate a correlation between the size of the feasible region and whether we obtain optimal values or only bounds. While some cases yield only bounds,  they serve as a valuable starting point for comprehending the system and developing specialized algorithms to efficiently solve it. \VV{  We identify this as a future research direction, along with investigating more general uncertainty sets.}

\bmhead{Acknowledgements}
The author would like to thank Dr. Vikas Vikram Singh, Department of Mathematics, IIT Delhi, for providing constructive feedback on the manuscript.

\section*{Declarations}
No funding was received for conducting this study. The author has no relevant financial or non-financial interests to disclose.
\color{black}
\appendix

\section{Dual formulations of inner optimization problems }\label{Deriving_dual_appendix}
 In \Crefrange{lemma_equivalent_LP_problem_specific_combined}{lemma_equivalent_LP_problem_specific_combined_d}, we showed that for a given feasible vector $(z,f)$ of \eqref{robust_cmdp_with_stationary_policy}, its inner optimization problems under \eqref{general_US_combined} can be equivalently reformulated into SOCP problems. We derive the dual formulations of these problems in the following two lemmas. For a fixed $f \in F_S$, the problem \eqref{equivalent_LP_problem_specific_combined} can be written  as
\begin{align}\label{equivalent_LP_problem_specific_primal}
    \max_{ \mathfrak{w}_c, \mathfrak{z}_c, q_c(s), r_c(s) } & \  \mathfrak{w}_c^T c_f  \nonumber \\ 
    \textnormal{s.t.} & \ B(s) \mathfrak{z}_c(s) - b(s) w_c(s) \leq 0, \ \forall \ s \in S, \nonumber \\
    & \ \| q_c(s) \|_2 \leq r_c(s),  \ \forall \ s \in S,  \nonumber \\
    & \ M(s)^T \mathfrak{z}_c(s) + m_0(s) w_c(s) = q_c(s), \forall \ s \in S, \nonumber \\
    & \ m_1(s)^T \mathfrak{z}_c(s) + m_2(s) w_c(s) = r_c(s), \forall \ s \in S, \nonumber \\
    & \  \mathfrak{w}_c \geq (1-\alpha) \gamma, \nonumber \\
& \ \mathfrak{w}_c^T (I- \alpha \bar{P}_f)  - \alpha \sum_{s \in S} 
\mathfrak{z}_c(s)^T F_f(s)   = (1-\alpha) \gamma^T. 
\end{align}
\begin{lemma}\label{dual_formulation_first_constr}
    Let \Cref{Strict_feasibility_assumption} hold true. For a fixed $f \in F_S$, \eqref{equivalent_LP_problem_specific_primal} is equivalent to the following SOCP problem. 
    \begin{align}\label{equivalent_formulation_dual_first_constr}
  \min_{\mathfrak{d}_c } &\ (1-\alpha ) \gamma^T (\varsigma_c  - \eta_c) \nonumber \\
  \textnormal{s.t.} &\  \textnormal{\Crefrange{bilinear_c_constraint}{SOC_c_constraints}}.
\end{align}
\end{lemma}
\begin{proof}
 For a fixed $f \in F_S$, let $ \big( \big(\beta_c(s)\big)_{s \in S},  \big(\theta_c(s)\big)_{s \in S} , \big(\mu_c(s)\big)_{s \in S},  \big(\lambda_c(s)\big)_{s \in S},  \eta_c, \varsigma_c\big)$ be the dual vector associated with \eqref{equivalent_LP_problem_specific_primal}. The associated Lagrangian function is given by \color{black}
\begin{align*}
    L(\mathfrak{w}_c, \mathfrak{z}_c, & q_c(s), r_c(s), \mathfrak{d}_c , \lambda_c(s)) \\
   & \begin{aligned}[t]
    = \mathfrak{w}_c^T c_f + \sum_{s \in S} \beta_c(s)^T \big(b(s) w_c(s) -  B(s) \mathfrak{z}_c(s) \big) + \sum_{s \in S} \theta_c(s) \big( r_c(s) - 
    \| q_c(s) \|_2 \big) \\
    + \sum_{s \in S} \mu_c(s)^T \big( M(s)^T \mathfrak{z}_c(s) +   m_0(s) w_c(s) - q_c(s) \big)
    + \sum_{s \in S} \lambda_c(s) \big( m_1(s)^T \mathfrak{z}_c(s) \\
    +  m_2(s) w_c(s) -  r_c(s) \big) + \eta_c^T \big( \mathfrak{w}_c - (1- 
    \alpha) \gamma \big) +  \big( (1-\alpha) \gamma^T \\
    + \alpha \sum_{s \in S} \mathfrak{z}_c(s)^T F_f(s)  - \mathfrak{w}_c^T (I- \alpha \bar{P}_f) \big)\varsigma_c
    \end{aligned}\\
    & =
    \begin{aligned}[t]
    \sum_{s \in S} w_c(s)\big( c_f(s) + \beta_c(s)^T  b(s) + \mu_c(s)^T m_0(s) + \lambda_c(s) m_2(s) + \eta_c(s) - \\
     e_s^T (I- \alpha \bar{P}_f)  \varsigma_c \big)
    +  \sum_{s \in S} \big( -\beta_c(s)^T B(s) + \mu_c(s)^T M(s)^T + \lambda_c(s)m_1(s)^T \\
    + \alpha \varsigma_c^T F_f(s)^T \big) \mathfrak{z}_c(s)  
    + \sum_{s \in S} \big( -\theta_c(s) \| q_c(s) \|_2 - 
    \mu_c(s)^T q_c(s) \big) 
    + \sum_{s \in S} \big(  \theta_c(s) \\ 
    - \lambda_c(s) \big)r_c(s) 
    + (1-\alpha ) \gamma^T (\varsigma_c  - \eta_c).
    \end{aligned}
\end{align*}
Consequently, the associated Lagrange dual function is given by 
\begin{align*}
\mathcal{L}(\mathfrak{d}_c , \lambda_c(s)) = \max_{\mathfrak{w}_c, \mathfrak{z}_c, q_c(s), r_c(s)}  L(\mathfrak{w}_c, \mathfrak{z}_c, & q_c(s), r_c(s), \mathfrak{d}_c , \lambda_c(s)).     
\end{align*}
Using the facts that a linear function is bounded above only when it is the zero function and for each $s \in S$,
\begin{align*}
    \max_{q_c(s)} \big( -\theta_c(s) \| q_c(s) \|_2 - 
    \mu_c(s)^T q_c(s) \big) = \begin{cases}
        0, & \| \mu_c(s) \|_2 \leq \theta_c(s), \\ 
        +\infty, & \text{otherwise},
    \end{cases}
\end{align*}
\color{black}
 we obtain the dual problem as   
\begin{align*}
  \min_{\mathfrak{d}_c, \lambda_c(s) } &\ (1-\alpha ) \gamma^T (\varsigma_c  - \eta_c)\\
  \textnormal{s.t.} &\  c_f(s) + \beta_c(s)^T  b(s) + \mu_c(s)^T m_0(s) + \lambda_c(s) m_2(s) + \eta_c(s) - e_s^T (I- \alpha \bar{P}_f)  \varsigma_c = 0, \ \forall \ s \in S, \\ 
  & \ -\beta_c(s)^T B(s) + \mu_c(s)^T M(s)^T + \lambda_c(s)m_1(s)^T + \alpha \varsigma_c^T F_f(s)^T = 0, \ \forall \ s \in S, \\ 
  & \ \| \mu_c(s) \|_2 \leq \theta_c(s), \ \forall \ s \in S, \\ 
  & \ \theta_c(s) = \lambda_c(s), \ \forall \ s \in S, \\
  & \ \beta_c(s) \geq 0,  \eta_c \geq 0, \ \forall \ s \in S.
\end{align*}
The above problem can be equivalently written as in \eqref{equivalent_formulation_dual_first_constr}. 
\end{proof}
 Similar to the preceding lemma, dual formulations hold for all other inner optimization problems in \eqref{robust_cmdp_with_stationary_policy}. For completeness, we summarize this formulation for each $k$ in the following lemma without proof.
\begin{lemma}
    Let \Cref{Strict_feasibility_assumption} hold true. For a fixed $f \in F_S$, \eqref{equivalent_LP_problem_specific_combined_d} is equivalent to the following SOCP problem. 
    \begin{align*} 
  \min_{\mathfrak{d}_{d^k} } &\ (1-\alpha ) \gamma^T (\varsigma_{d^k}  - \eta_{d^k}) \nonumber \\
  \textnormal{s.t.} &\  \textnormal{\Crefrange{bilinear_dk_constraint}{SOC_dk_constraints}}.
\end{align*}
\end{lemma}
\section{Alternative reformulation of \eqref{robust_cmdp_with_stationary_policy}}\label{Alternative_equivalence_appendix}
As stated in \Cref{irreducibility_remark}, we consider the following assumption. 
\begin{assumption}\label{irreducible_assumption}
    The inequality $\gamma^T \big(I - \alpha P_{\min}\big)^{-1} >0$ holds true.
\end{assumption}
\begin{lemma}\label{lemma_alternative_equivalent_LP_problem_specific_combined}
    Let \Cref{irreducible_assumption} hold true. For a fixed $f \in F_S$, \eqref{Inner_opt_with_c_combined} is equivalent to the following SOCP problem.   \begin{subequations}\label{equivalent_SOCP_problem_specific_combined_alternative_formulation}
\begin{align}
    \max_{ \mathfrak{w}_c, \mathfrak{z}_c } & \  \mathfrak{w}_c^T c_f   \nonumber \\ 
    \textnormal{s.t.}  
    & \  \mathfrak{w}_c^T \geq (1-\alpha) \gamma^T \big(I - \alpha P_{\min}\big)^{-1}, \label{positive_w_combined_alternative_formulation} \\
& \ \textnormal{\Crefrange{polyhedral_combined_multiplied}{second_order_combined_multiplied}},  \cref{flow_balance_constr_specific_combined}. 
\end{align}
\end{subequations}
\end{lemma}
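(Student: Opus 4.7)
The plan is to adapt the proof of \Cref{lemma_equivalent_LP_problem_specific_combined} essentially verbatim, replacing the single place where \Cref{assumption_positive_gamma} was invoked (to guarantee $\mathfrak{w}_c>0$) by an analogous argument built on \Cref{irreducible_assumption}. Concretely, the earlier proof used $\gamma>0$ and $Q_f\ge I$ to conclude both the lower-bound constraint $\mathfrak{w}_c\ge (1-\alpha)\gamma$ and the strict positivity $\mathfrak{w}_c>0$ needed to scale constraints by $w_c(s)$. Here I would instead lean on the componentwise inequality $P_f\ge P_{\min}$, which follows directly from the definition of $p_{\min}(s'\mid s)$ and feasibility of $\mathfrak{u}$.

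For the forward direction, I would take a feasible $\mathfrak{u}$ of \eqref{Inner_opt_with_c_combined} and define $w_c(s)$ and $z_c(s'\mid s,a)$ exactly as in \eqref{definition_w_c_combined}--\eqref{definition_z_c_combined}. The new ingredient is verifying \eqref{positive_w_combined_alternative_formulation}. For this I would observe that both $I-\alpha P_f$ and $I-\alpha P_{\min}$ are M-matrices with $I-\alpha P_f\le I-\alpha P_{\min}$ entrywise; invoking the standard monotonicity of M-matrix inverses (as in Theorem 1.8 of \cite{johnson2011inverse}, applied to the Neumann series expansion) gives $Q_f=(I-\alpha P_f)^{-1}\ge (I-\alpha P_{\min})^{-1}$ componentwise. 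Left-multiplying by the nonnegative row vector $(1-\alpha)\gamma^T$ preserves this inequality and yields \eqref{positive_w_combined_alternative_formulation}. Combined with \Cref{irreducible_assumption}, this forces $\mathfrak{w}_c>0$, so scaling \eqref{polyhedral_combined} and \eqref{second_order_combined} by $w_c(s)$ produces \eqref{polyhedral_combined_multiplied}--\eqref{second_order_combined_multiplied}, while the flow-balance computation \eqref{FB_derivation_combined} is reproduced word for word to give \eqref{flow_balance_constr_specific_combined}. Equality of objectives follows as before.

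For the converse, I would take a feasible $(\mathfrak{w}_c,\mathfrak{z}_c)$ of \eqref{equivalent_SOCP_problem_specific_combined_alternative_formulation}. The new lower bound \eqref{positive_w_combined_alternative_formulation} together with \Cref{irreducible_assumption} immediately provides $w_c(s)>0$ for every $s\in S$, so I can define $u(s'\mid s,a)=z_c(s'\mid s,a)/w_c(s)$ as in the original proof. Substituting this into \eqref{flow_balance_constr_specific_combined} and reading the resulting linear system against \eqref{FB_derivation_combined} recovers $w_c(s)=(1-\alpha)\gamma^T Q_f e_s$ (and hence matching objective values), while dividing \eqref{polyhedral_combined_multiplied}--\eqref{second_order_combined_multiplied} by the positive $w_c(s)$ returns \eqref{polyhedral_combined}--\eqref{second_order_combined}; thus $\mathfrak{u}$ is feasible for \eqref{Inner_opt_with_c_combined}.

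The main obstacle, and really the only novel step, is establishing the matrix inequality $Q_f\ge (I-\alpha P_{\min})^{-1}$ in a way that cleanly upgrades through left-multiplication by $\gamma^T$ to justify \eqref{positive_w_combined_alternative_formulation}. Once this M-matrix monotonicity is stated carefully, the rest of the argument is a direct transcription of the proof of \Cref{lemma_equivalent_LP_problem_specific_combined}, with \Cref{irreducible_assumption} playing the role that \Cref{assumption_positive_gamma} played before in guaranteeing componentwise positivity of $\mathfrak{w}_c$.
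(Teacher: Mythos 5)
Your proposal is correct and follows essentially the same route as the paper's own proof: define $(\mathfrak{w}_c,\mathfrak{z}_c)$ via \eqref{definition_w_c_combined}--\eqref{definition_z_c_combined}, use $P_f\ge P_{\min}$ and M-matrix monotonicity (Theorem 1.8 of \cite{johnson2011inverse}) to get $Q_f\ge (I-\alpha P_{\min})^{-1}$ and hence \eqref{positive_w_combined_alternative_formulation}, then let \Cref{irreducible_assumption} supply $\mathfrak{w}_c>0$ in place of \Cref{assumption_positive_gamma} and transcribe the remainder of the proof of \Cref{lemma_equivalent_LP_problem_specific_combined}. No gaps.
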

\begin{proof}
 Let $f \in F_S$ be fixed. For a given feasible vector $\mathfrak{u}$ of \eqref{Inner_opt_with_c_combined}, we define a vector 
$(\mathfrak{w}_c, \mathfrak{z}_c )$ as in \eqref{definition_w_c_combined} and \eqref{definition_z_c_combined}, respectively. 
    By the definition of $P_{\min}$, ${P}_f \geq P_{\min}$.  Since   $I-\alpha P_{\min} $ and $I - \alpha P_f$ are M-matrices, it follows from Theorem 1.8 of  \cite{johnson2011inverse} that $Q_f \geq \big(I - \alpha P_{\min}\big)^{-1}$, thus \eqref{positive_w_combined_alternative_formulation} is  satisfied from \eqref{definition_w_c_combined}. From \Cref{irreducible_assumption}, we obtain $\mathfrak{w}_c > 0$. The remaining arguments follow similarly to the proof of \Cref{lemma_equivalent_LP_problem_specific_combined} under \cref{irreducible_assumption}  instead of \Cref{assumption_positive_gamma}. 
\end{proof}
 Similar to the preceding lemma, SOCP reformulations hold for all other inner optimization problems in \eqref{robust_cmdp_with_stationary_policy}.  
By substituting the inner optimization problems in \eqref{robust_cmdp_with_stationary_policy} with these SOCP problems, we derive an equivalent reformulation of \eqref{robust_cmdp_with_stationary_policy} as a minimization problem.
\begin{theorem}\label{Theorem_duality_bilinear_socp}
    Let \Crefrange{Strict_feasibility_assumption}{irreducible_assumption} hold true. The robust CMDP problem \eqref{robust_cmdp_with_stationary_policy} with $\mathfrak{U}$ defined by \eqref{general_US_combined}, is equivalent to the following problem. 
        \begin{align}\label{robust_cmdp_BP_SOCP_formulation_alternate_reformulation}
      &\min_{z, f, \mathfrak{d}_c, \mathfrak{d}_{d^k}}  \ z \nonumber \\
   \textnormal{s.t.}   & \  (1-\alpha ) \gamma^T \big(\varsigma_c  - \big(I - \alpha P_{\min}\big)^{-1} \eta_c \big) \leq z, \nonumber \\
    & \  (1-\alpha ) \gamma^T \big(\varsigma_{d^k}  - \big(I - \alpha P_{\min}\big)^{-1} \eta_{d^k} \big) \leq \xi_k, \ \forall \ k \in \mathbb{K}, \nonumber \\
    & \ \textnormal{\Crefrange{bilinear_c_constraint}{SOC_c_constraints}},\ \textnormal{\Crefrange{bilinear_dk_constraint}{SOC_dk_constraints}}.   
        \end{align}
\end{theorem}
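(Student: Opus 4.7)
The plan is to mirror the proof of the first theorem almost verbatim, with the only substantive change arising from the modified lower-bound constraint on $\mathfrak{w}_c$ in \Cref{lemma_alternative_equivalent_LP_problem_specific_combined}. First, I would fix a candidate feasible triple $(z,f)$ and, for each of the $K+1$ inner maximization problems in \eqref{robust_cmdp_with_stationary_policy}, invoke \Cref{lemma_alternative_equivalent_LP_problem_specific_combined} to rewrite it as the SOCP \eqref{equivalent_SOCP_problem_specific_combined_alternative_formulation}. Thus the robust problem is equivalent to requiring that the optimal value of each such SOCP is at most $z$ (respectively $\xi_k$).

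Next, I would dualize each inner SOCP. Since \Cref{lemma_alternative_equivalent_LP_problem_specific_combined} differs from \Cref{lemma_equivalent_LP_problem_specific_combined} only in replacing the constraint $\mathfrak{w}_c \geq (1-\alpha)\gamma$ by $\mathfrak{w}_c \geq (1-\alpha)(I-\alpha P_{\min})^{-T}\gamma$, the Lagrangian is structurally identical to the one dualized in \Cref{Deriving_dual_appendix}, except that the multiplier $\eta_c$ now couples to the vector $(1-\alpha)(I-\alpha P_{\min})^{-T}\gamma$ rather than to $(1-\alpha)\gamma$. Consequently, the only change in the dual is that the $-\eta_c$ term in the objective is replaced by $-(I-\alpha P_{\min})^{-T}\eta_c$ after left-multiplication by $(1-\alpha)\gamma^T$, yielding the objective $(1-\alpha)\gamma^T\bigl(\varsigma_c - (I-\alpha P_{\min})^{-1}\eta_c\bigr)$. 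The constraints on $\varsigma_c$ dual to \eqref{flow_balance_constr_specific_combined}, \eqref{polyhedral_combined_multiplied}, and \eqref{second_order_combined_multiplied} are unchanged, so \Crefrange{bilinear_c_constraint}{SOC_c_constraints} carry over verbatim. The same reasoning applies componentwise for each constraint $k\in\mathbb{K}$ with the variables $\mathfrak{d}_{d^k}$.

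I would then invoke strong duality for each inner SOCP. Here \Cref{Strict_feasibility_assumption} plays exactly the same role as in the original theorem: it guarantees that the primal SOCP \eqref{equivalent_SOCP_problem_specific_combined_alternative_formulation} satisfies Slater's condition (the strict feasibility of the second-order cone constraints lifts to strict feasibility after the positive scaling by $w_c(s) > 0$, using that $\mathfrak{w}_c \geq (1-\alpha)(I-\alpha P_{\min})^{-T}\gamma > 0$ by \Cref{irreducible_assumption}). Therefore the primal and dual optimal values coincide, and each inequality $\max \leq z$ (respectively $\xi_k$) is equivalent to the existence of a dual-feasible vector attaining the modified dual objective bounded by $z$ (respectively $\xi_k$). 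Substituting these dual constraint systems back into \eqref{robust_cmdp_with_stationary_policy} yields precisely \eqref{robust_cmdp_BP_SOCP_formulation_alternate_reformulation}.

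The main obstacle is bookkeeping rather than a new idea: one must verify carefully that the altered right-hand side $(1-\alpha)(I-\alpha P_{\min})^{-T}\gamma$ of the bound constraint on $\mathfrak{w}_c$ propagates only through the coefficient of $\eta_c$ (and $\eta_{d^k}$) in the dual objective, while leaving the dual feasibility constraints \Crefrange{bilinear_c_constraint}{SOC_c_constraints} and \Crefrange{bilinear_dk_constraint}{SOC_dk_constraints} unchanged; and that strict feasibility of the primal SOCP, hence strong duality, is preserved under \Cref{irreducible_assumption} in place of \Cref{assumption_positive_gamma}. Once this is checked, the rest of the argument is a direct substitution.
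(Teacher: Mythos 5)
Your proposal is correct and follows essentially the same route as the paper: apply \Cref{lemma_alternative_equivalent_LP_problem_specific_combined} to each inner problem, observe that the only change in the Lagrangian dual relative to \Cref{Deriving_dual_appendix} is that the multiplier $\eta_c$ (resp.\ $\eta_{d^k}$) now pairs with $(1-\alpha)\big(I-\alpha P_{\min}\big)^{-T}\gamma$, yielding the modified dual objective while leaving \Crefrange{bilinear_c_constraint}{SOC_c_constraints} and \Crefrange{bilinear_dk_constraint}{SOC_dk_constraints} unchanged, and then invoke strong duality under \Cref{Strict_feasibility_assumption}. Your added remark on why strict feasibility survives the scaling by $w_c(s)>0$ is a harmless elaboration that the paper leaves implicit.
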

\begin{proof}
    For a fixed  $z$ and $f \in F_s$, it follows from \Cref{lemma_alternative_equivalent_LP_problem_specific_combined} that the inner optimization problem in the first constraint of \eqref{robust_cmdp_with_stationary_policy} can be equivalently reformulated into the SOCP problem \eqref{equivalent_SOCP_problem_specific_combined_alternative_formulation}. Similar to \Cref{Deriving_dual_appendix}, its dual problem is given by 
    \begin{align*}
  \min_{\mathfrak{d}_c } &\ (1-\alpha ) \gamma^T \big(\varsigma_c  - \big(I - \alpha P_{\min}\big)^{-1} \eta_c \big)\\
  \textnormal{s.t.} &\  \textnormal{\Crefrange{bilinear_c_constraint}{SOC_c_constraints}}.
\end{align*}
Furthermore, it follows from \Cref{Strict_feasibility_assumption} that the optimal values of the above problem and \eqref{equivalent_SOCP_problem_specific_combined_alternative_formulation} are equal. We derive dual problems for all other inner optimization problems of \eqref{robust_cmdp_with_stationary_policy} similarly. Therefore, by substituting these dual problems in \eqref{robust_cmdp_with_stationary_policy}, we obtain \eqref{robust_cmdp_BP_SOCP_formulation_alternate_reformulation}.
\end{proof}

\color{black}


\bibliography{sn-bibliography}

\end{document}